\begin{document}

\title{On Geometry and Topology of 4-Orbifolds}
\author{Dmytro Yeroshkin}
\address{Syracuse University, Department of Mathematics, 215 Carnegie Building, Syracuse, NY 13244, USA}
\email{dyeroshk@syr.edu}

\begin{abstract}
We prove an analogue of the result of Hsiang and Kleiner for 4-dimensional compact orbifolds with positive curvature and an isometric $S^1$ action. Additionally, we prove that when $\pi_1(|\calO^4|)=0$, then $\pi_1^{orb}(\calO)$ provides a bound on the failure of $\bbZ$-valued Poincar\'e Duality of $|\calO|$, and if $\pi_1^{orb}(\calO)=0$, then $\bbZ$-valued Poincr\'e Duality holds for $|\calO|$.
\end{abstract}
\maketitle
%\tableofcontents

\section{Introduction}\label{SecIntro}

One important question in Riemannian geometry is what spaces admit metrics of positive curvature. In particular, the results that distinguish between manifolds admitting non-negative curvature and those admitting positive curvature are the theorems of Bonnet-Myers and Synge in the compact case and Perelman's proof of the soul conjecture in the non-compact case. If we add assumptions on the size of the isometry group, then we have the result of Hsiang and Kleiner \cite{HK4Dim}, that a positively curved 4-dimensional Riemannian manifold with an isometric $S^1$ action is homeomorphic to either $S^4,\RP^4$ or $\CP^2$ (in fact by results of Fintushel \cite{FCircle} this is true up to diffeomorphism). In higher dimensions, the assumption of a larger isometry group can be used, see \cite{GSSym}, \cite{WTorAct}, \cite{FRHomPC}, \cite{KHopf} and \cite{WPCM}. More recently, some work has been done on this question in a more general setting, see the work of Harvey and Searle \cite{HSAlex} and Galaz-Garcia and Guijarro \cite{GGGAlex} for results on positively curved Alexandrov spaces. In this paper, we prove an analogue of the result of Hsiang and Kleiner in the case of orbifolds.

\begin{mthm}\label{ThmMain}
Let $\calO$ be a compact 4-dimensional, positively curved Riemannian orbifold with an isometric $S^1$ action and $\pi_1^{orb}(\calO) = 0$, then one of the following holds:

\begin{enumerate}
\item either $|\calO|$ is homotopy equivalent to $S^4$,

\item or $H^*(|\calO|;\bbZ)=H^*(\CP^2;\bbZ)$.
\end{enumerate}

Furthermore, if the $S^1$ action has a 2-dimensional fixed point set, then

\begin{enumerate}
\item either $|\calO|$ is homeomorphic to $S^4$,

\item or $|\calO|$ is homeomorphic to the underlying space of $\CP^2[\lambda_0,\lambda_1,\lambda_2]$ for some positive integers $\lambda_0, \lambda_1, \lambda_2$.
\end{enumerate}

In both of these cases, the $S^1$ action is equivariant to a linear action.
\end{mthm}

Here we denote by $|\calO|$ the underlying topological space of an orbifold $\calO$. Recall that weighted projective spaces, denoted by $\CP^2[\lambda_0,\lambda_1,\lambda_2]$, $\lambda_i\in\bbZ^+$ (or $\CP^2[\lambda]$ for short), are $4$-dimensional orbifolds which can be written as $S^5/S^1_\lambda$, where $S^5\subset\bbC^3$ and $z\in S^1_\lambda$ acts on $(w_0,w_1,w_2)\in\bbC^3$ as $(z^{\lambda_0}w_0,z^{\lambda_1}w_1,z^{\lambda_2}w_2)$. Taking the round metric on $S^5$, we get a natural metric with positive sectional curvature on $\CP^2[\lambda_0,\lambda_1,\lambda_2]$, for which we still have an isometric $S^1$ action.

If $\pi_1^{orb}(\calO)\neq 0$, then the orbifold is a finite quotient of one of the cases listed above. It is worth noting, that in \cite{HSAlex} the authors claim a similar result for 4-dimensional Alexandrov spaces.

In the work of Hsiang-Kleiner, the authors use the work of Freedman \cite{F4M} to provide the topological classification. Since no such work exists for orbifolds, we use the work of Perelman on Alexandrov spaces \cite{PAlex} for the case when the fixed point set has dimension two. For the case when the fixed point set consists of isolated points, we classify the cohomology of $|\calO|$.

\begin{mthm}\label{ThmTop}
Let $\calO$ be a compact, orientable, 4-dimensional orbifold with $\pi_1(|\calO|)=0$, and $\chi(|\calO|)=n$. Then,
\[
H^k(|\calO|;\bbZ) = \begin{cases}
\bbZ, & k=0,4\\
0, & k=1\\
\bbZ^{n-2}, & k=2\\
\tau,&k=3,
\end{cases}
\]
where $\tau$ is some torsion group such that there is a surjective map $\phi:\pi_1^{orb}(\calO)\to \tau = H^3(|\calO|)$.

In particular, if $\pi_1^{orb}(\calO)=0$, then
\[
H^k(|\calO|;\bbZ) =\begin{cases}
\bbZ,&k=0,4\\
0,&k=1,3\\
\bbZ^{n-2},&k=2.
\end{cases}
\]
\end{mthm}

So, we can apply this theorem to better understand a family of orbifolds introduced by the author in \cite{YSU3}:

\begin{cor}
Every $S^1$ quotient of the Wu manifold ($S^1_{p,q}\backslash SU(3)/SO(3)$) has integer cohomology of $\CP^2$.
\end{cor}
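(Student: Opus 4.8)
The plan is to verify that each hypothesis of Theorem~\ref{ThmTop} holds for $\calO = S^1_{p,q}\backslash W$, where $W = SU(3)/SO(3)$ is the Wu manifold, and then read off the conclusion with $n = \chi(|\calO|) = 3$. Recall that $W$ is a compact, simply connected, orientable $5$-manifold whose only torsion is $H^3(W;\bbZ)\cong\bbZ/2$, so that $W$ is a rational homology $5$-sphere. By the explicit description in \cite{YSU3} the action of $S^1_{p,q}$ is locally free, so $\calO$ is a genuine compact $4$-orbifold (the action has no fixed points, which would otherwise produce non-orbifold cone singularities), and since the action is by an orientation-preserving connected group on an oriented manifold, $\calO$ is orientable. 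Thus it remains to check $\pi_1(|\calO|) = 0$, $\pi_1^{orb}(\calO) = 0$, and $\chi(|\calO|) = 3$, after which Theorem~\ref{ThmTop} applies verbatim.

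For the fundamental groups I would argue as follows. Because $W$ is simply connected and $S^1$ is connected, the orbit map exhibits $|\calO|$ as the quotient of a simply connected space by a connected group, whence $\pi_1(|\calO|) = 0$. For the orbifold fundamental group I would use the orbifold homotopy exact sequence of the Seifert fibration $S^1 \hookrightarrow W \to \calO$, namely
\[
\pi_1(W) \longrightarrow \pi_1^{orb}(\calO) \longrightarrow \pi_0(S^1).
\]
Since $\pi_0(S^1) = 0$ the right-hand map is trivial, so $\pi_1^{orb}(\calO)$ is the image of $\pi_1(W) = 0$, giving $\pi_1^{orb}(\calO) = 0$.

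It remains to compute the Euler characteristic. As $W \simeq_{\bbQ} S^5$, I would feed $H^*(W;\bbQ)$ into the rational Gysin sequence of the orbifold circle fibration $S^1 \to W \to \calO$, with Euler class $e \in H^2(|\calO|;\bbQ)$. Exactly as for the Hopf fibration $S^5 \to \CP^2$, cup product with $e$ yields isomorphisms $H^0 \cong H^2 \cong H^4$ and forces the odd-degree groups to vanish, so $H^*(|\calO|;\bbQ) \cong H^*(\CP^2;\bbQ)$ and therefore $\chi(|\calO|) = 3$. Applying Theorem~\ref{ThmTop} with $n = 3$ and $\pi_1^{orb}(\calO) = 0$ then gives $H^0 = H^4 = \bbZ$, $H^1 = H^3 = 0$, and $H^2 = \bbZ^{\,n-2} = \bbZ$, which is precisely $H^*(\CP^2;\bbZ)$.

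The main obstacle is not the cohomological bookkeeping but the structural input: one must be certain the $S^1$ action is locally free, so that $\calO$ is actually an orbifold and the Gysin and homotopy sequences for a Seifert fibration are available. This is where I would rely on the explicit model of $S^1_{p,q}\backslash SU(3)/SO(3)$ in \cite{YSU3}; granting local freeness, the vanishing of $\pi_1^{orb}(\calO)$ and the equality $\chi(|\calO|)=3$ follow formally, and Theorem~\ref{ThmTop} supplies the rest.
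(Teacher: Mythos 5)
Your proposal is correct and follows exactly the route the paper intends: the corollary is stated as an immediate application of Theorem~\ref{ThmTop}, and you verify its hypotheses (orientability, $\pi_1(|\calO|)=0$, $\pi_1^{orb}(\calO)=0$ via the orbifold homotopy sequence of the Seifert fibration, and $\chi(|\calO|)=3$ via the rational Gysin sequence) before reading off the conclusion with $n=3$. The paper leaves these verifications implicit, deferring the structural facts about $S^1_{p,q}\backslash SU(3)/SO(3)$ to \cite{YSU3}, so your write-up simply supplies the details.
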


A portion of this work was part of the author's Ph.D. thesis. The author would like to thank his advisor, Wolfgang Ziller, for his invaluable advice and encouragements.

\section{Preliminaries on Orbifolds}\label{SecOrbi}

Recall that an $n$-dimensional orbifold $\calO^n$ is a space modeled locally on $\bbR^n/\Gamma$ with $\Gamma\subset O(n)$ finite. Given a point $p\in\calO$, the orbifold group at $p$, which we'll denote as $\Gamma_p$ is the subgroup of the group $\Gamma$ in the local chart $\bbR^n/\Gamma$, that fixes a lift of $p$ to $\bbR^n$. Note that different choices of a lift of $p$ result in $\Gamma_p$ being conjugated, and as such, we will think of $\Gamma_p$ up to conjugacy.

In many of the results in this paper, it will be useful to think of an orbifold $\calO^n$ as a disjoint collection of connected strata. Each stratum, denoted by $\calS$ is a connected component of points with the same (up to conjugacy) orbifold group. The stratum containing $p$ will often be denoted by $\calS(p)$. One stratum deserves special mention, $\calO^{reg}$ is the stratum of points with trivial orbifold groups, and points in $\calO^{reg}$ are called regular. Furthermore $\calO^{reg}$ is an open dense subset of $\calO$.

Recall that an orbifold $\calO$ is said to be orientable if $\calO^{reg}$ is orientable, and each orbifold group $\Gamma_p$ preserves orientation, that is $\Gamma_p\subset SO(n)$ for each $p\in\calO$. A choice of an orientation on $\calO$ is a choice of an orientation on $\calO^{reg}$.

We call an orbifold $\calU$ a cover of $\calO$ if $\calO = \calU/\Gamma$, with $\Gamma$ discrete, such that the action of $\Gamma$ preserves the orbifold structure. We recall the definition of $\pi_1^{orb}$, the orbifold fundamental group. If $\calO=\calU/\Gamma$, with $\Gamma$ discrete and $\calU$ admitting no covers, then $\pi_1^{orb}(\calO)=\Gamma$.

\begin{rmk}
There is an alternative definition of an orbifold fundamental group, which is analogous to the path homotopy definition of the topological fundamental group. For precise definition and proof of the equivalence see \cite{SG3M}.
\end{rmk}

The proposition below demonstrates the analog of Synge's Theorem for Orbifolds, a version for Alexandrov spaces can be found in \cite{HSAlex}, and in with the additional assumption of local orientability in the even dimensional case in \cite{PPar}.

\begin{prop}[Synge's Theorem for Orbifolds]\label{PropSynge}
Let $\calO$ be a compact positively curved orbifold, then

\begin{enumerate}
\item if $n$ is even, and $\calO$ orientable, then $|\calO|$ is simply connected.

\item if $n$ is odd, and for every $p\in\calO$, $\Gamma_p\subset SO(n)$, then $\calO$ is orientable.
\end{enumerate}
\end{prop}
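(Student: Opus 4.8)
The plan is to run the classical second-variation argument of Synge, with the regular part $\calO^{reg}$ playing the role of the manifold, and with all the new content concentrated in controlling how a shortest ``bad'' geodesic loop meets the singular set. First I would record what the orientation hypotheses buy us at the level of the stratification. In case (1), orientability forces every $\Gamma_p\subset SO(n)$; in case (2) this is assumed directly. In either case no local group contains a reflection, so there are no codimension-one (mirror) strata, and the singular set $\calO\setminus\calO^{reg}$ has codimension at least two in $|\calO|$. Consequently $|\calO|$ is a topological manifold away from a codimension-$2$ set, the inclusion $\calO^{reg}\hookrightarrow\calO$ induces a surjection $\pi_1(|\calO^{reg}|)\to\pi_1(|\calO|)$, and the class $w_1$ of the Riemannian manifold $\calO^{reg}$ makes ``orientation-reversing free homotopy class'' meaningful.

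Next I would produce the geodesic to be shortened. In case (1), assuming $\pi_1(|\calO|)\neq 0$, I minimize length over a fixed nontrivial free homotopy class; in case (2), assuming $\calO^{reg}$ non-orientable, I minimize length among orientation-reversing loops. Since $\calO$ is compact, Arzel\`a--Ascoli provides a shortest representative $\gamma$, and the first variation formula shows $\gamma$ is a closed geodesic of $\calO$ (no corners, smoothly closed).

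The core of the proof, and the step I expect to be the \emph{main obstacle}, is understanding $\gamma\cap(\calO\setminus\calO^{reg})$. Because every cone stratum has total angle $<2\pi$ in its normal directions, a geodesic meeting the singular set transversally to a stratum can always be shortened by pushing it to one side; hence the minimizer either lies entirely in $\calO^{reg}$ or runs, totally geodesically, inside a single singular stratum $\calS$. In the first case $\gamma$ is an honest closed geodesic of the smooth manifold $\calO^{reg}$ and the classical argument applies verbatim. In the second case I would work in a local lift $\bbR^k\times(\bbR^{n-k}/\Gamma)$, where $\gamma$ lifts to a geodesic and the normal space to $\gamma$ splits $\Gamma$-equivariantly into a tangent-to-$\calS$ part and a normal-cone part; here I must verify that parallel transport preserves this splitting and that the resulting holonomy and parallel fields descend to the orbifold. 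Confining the minimizer to one totally geodesic stratum and carrying the holonomy argument through the orbifold normal-cone bundle is the genuinely orbifold-specific difficulty.

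Finally comes the Synge eigenvalue computation. Parallel transport $P$ around $\gamma$ fixes $\gamma'$ and acts orthogonally on the $(n-1)$-dimensional normal space. In case (1), orientability makes $P$ orientation-preserving, so $P\in SO(n-1)$ with $n-1$ odd, which forces a $+1$-eigenvector; in case (2), $\gamma$ is orientation-reversing so $\det P=-1$ in $O(n-1)$ with $n-1$ even, which again forces a $+1$-eigenvector. Either way I obtain a parallel, periodic, unit normal field $V$ along $\gamma$, and the second variation of length in the direction $V$ equals $-\int\langle R(V,\gamma')\gamma',V\rangle<0$ by positive curvature. When $V$ has a normal-cone component the variation simply moves $\gamma$ off $\calS$ into $\calO^{reg}$, so in all cases it produces a strictly shorter loop in the same (respectively, orientation-reversing) free homotopy class, contradicting minimality. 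This gives $\pi_1(|\calO|)=0$ in case (1), and shows $\calO^{reg}$, hence $\calO$, is orientable in case (2).
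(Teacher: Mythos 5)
The paper does not actually prove Proposition~\ref{PropSynge}: it only points to \cite{HSAlex} (Alexandrov version) and \cite{PPar}, so there is no in-paper argument to compare yours against. Your route --- minimize in a free homotopy class of $|\calO|$ (resp.\ over orientation-reversing loops of $\calO^{reg}$), show the minimizer is a closed geodesic that either avoids the singular set or is trapped in it, then run the parallel-transport eigenvalue argument and the second variation --- is the natural adaptation of Synge and is essentially what the cited sources carry out. Your structural observations are correct: $\Gamma_p\subset SO(n)$ rules out codimension-one strata, so $\pi_1(|\calO^{reg}|)\to\pi_1(|\calO|)$ is onto; and a locally minimizing curve through a singular point $p$ must have incoming and outgoing directions at distance $\pi$ in $S^{n-1}/\Gamma_p$, which forces both to lie in $\Fix(\Gamma_p)$, so the minimizer is tangent to, and hence locally contained in, the totally geodesic fixed-point set (your ``total angle $<2\pi$'' phrasing is literally only the codimension-two picture; the general statement is about which pairs of directions realize diameter $\pi$ in $S^{n-1}/\Gamma_p$). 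One small imprecision: the minimizer need not lie in a \emph{single} stratum --- it can pass through deeper strata in the closure of $\calS$ --- but this does not affect the argument.

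The one place where you have a genuine gap is exactly the step you flag but do not carry out: making the holonomy $P$ and the length-decreasing variation well defined when $\gamma$ meets the singular set. To close it, you should (i) choose a chain of local charts along $\gamma$ and consistent local lifts $\tilde\gamma_j$ (possible because $\gamma$ is tangent to $\Fix(\Gamma_p)$ at every singular point it meets, so each local lift is $\Gamma_p$-invariant as a set and parallel transport upstairs is unambiguous along it); (ii) observe that the total holonomy around the loop, computed through these lifts, is well defined only up to composition with an element $g\in\Gamma_{\gamma(0)}$, but every such $g$ fixes $\gamma'(0)$ and lies in $SO(n)$ by orientability, so \emph{any} representative $g^{-1}P$ restricted to the normal space lies in $SO(n-1)$ (case (1)) or has determinant $-1$ in $O(n-1)$ (case (2)), and the eigenvalue count is unaffected; and (iii) note that the downstairs curves $\gamma_s$ are the images of the upstairs variation under the $1$-Lipschitz quotient maps, so $L(\gamma_s)\le L(\tilde\gamma_s)<L(\gamma)$ and $\gamma_s$ stays in the same free homotopy class. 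The splitting into tangent-to-$\calS$ and normal-cone parts is not actually needed --- any parallel unit normal field that closes up in $T\calO$ suffices. Alternatively, all of this bookkeeping can be avoided by working in the oriented orthonormal frame bundle of $\calO$, which is a genuine smooth manifold with $\calO=Fr_{SO}(\calO)/SO(n)$, lifting the minimizing loop there and descending the variation.
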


We add in this section a few warnings about situations in which the behavior of orbifolds differs significantly from that of manifolds.

If a finite group fixes two complementary subspaces of $T_p\calO$, the tangent cone at $p$, which is defined as the space of directions (with norm), or alternatively as the quotient of $T_{\bar{p}}\tilde{\calU}/\Gamma_p$ of the tangent space in the local cover, then it need not fix the entire tangent space.

Additionally, there is a challenge in defining when two vectors in $T_p\calO$ are orthogonal. The first possible definition is $v$ and $w$ are orthogonal if there are lifts $\tilde{v},\tilde{w}$ which are orthogonal in the local cover. However, with this definition, it is possible to have $v$ orthogonal to itself, for example $\calO=S^2/rot_{\pi/2}$ then at the north pole, every vector is orthogonal to itself. The second possible definition is to say $v,w$ are orthogonal if all lifts $\tilde{v},\tilde{w}$ are orthogonal in the local cover. Under this definition, it is possible that $v$ has no non-zero vectors orthogonal to it (same $\calO$ as before); nevertheless, this is the definition we prefer.

We now observe that the same proof as for manifolds shows that the Slice Theorem holds for orbifolds. We observe that a Slice Theorem holds for Alexandrov spaces \cite{HSAlex} as well, but the proof is significantly more difficult and does not consider how it interacts with the orbifold structure.

\begin{prop}[Slice Theorem for Orbifolds]
Let $G$ be a compact connected Lie group, $\calO^n$ a Riemannian orbifold, with an isometric $G$-action. Then, given any $p\in\calO$, and sufficiently small $r>0$, we have a $G$-equivariant orbifold diffeomorphism
\[
B_r(G(p))\cong G\times_{G_p} Cone(\nu_p),
\]
where $\nu_p=\{v\in T_p\calO|v\perp G(p),|v|=1\}$ is the space of directions orthogonal to the orbit.
\end{prop}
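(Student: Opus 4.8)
The plan is to mimic the standard manifold proof of the Slice Theorem, verifying at each step that the constructions respect the orbifold charts. First I would set up the normal exponential map. Since $G$ acts by isometries on $\calO$, the orbit $G(p)$ is an embedded suborbifold, and the normal bundle $\nu(G(p))$ inherits a $G$-action. The key point is that the Riemannian exponential map of an orbifold is well-defined and a local diffeomorphism away from the orbifold locus, and near a point $q$ with orbifold group $\Gamma_q$ it lifts to the ordinary exponential map on the local cover $\tilde{\calU}\cong\bbR^n$ before descending to $\tilde{\calU}/\Gamma_q$. Because geodesics are isometry-invariant, the exponential map intertwines the $G$-action on the normal data with the $G$-action on $\calO$, so the map $G\times_{G_p}\Cone(\nu_p)\to\calO$ given by $[g,v]\mapsto \exp_{g(p)}(dg\,v)$ is $G$-equivariant wherever it is defined.

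Next I would verify the map is a well-defined orbifold diffeomorphism onto $B_r(G(p))$ for small $r$. Well-definedness under the $G_p$-twisting is immediate from equivariance of the exponential map. For the diffeomorphism property, I would argue locally: around each point of the tube, pass to a $\Gamma$-cover where the classical Slice Theorem (for the isometric $G$-action lifted to the chart, or rather for the induced action of the relevant subgroup) applies, giving a local diffeomorphism, and then check these local models glue compatibly with the orbifold atlas. The factor $Cone(\nu_p)$ rather than a linear normal disk appears precisely because the fiber of the normal ``bundle'' at the singular orbit is a cone $T_p\calO/G(p)$-direction quotiented by $\Gamma_p$, i.e.\ the metric cone on the space of directions $\nu_p$; this is the orbifold analogue of the normal slice $D^k$, and identifying it correctly is where the orbifold structure must be handled with care.

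The main obstacle I expect is the interaction between the isotropy group $G_p$ and the orbifold group $\Gamma_p$, together with the subtleties about orthogonality flagged earlier in the paper. Concretely, I must ensure that ``$v\perp G(p)$'' is interpreted with the preferred (all-lifts) notion of orthogonality, and that $G_p$ acts on $\nu_p=\{v\in T_p\calO: v\perp G(p),\,|v|=1\}$ compatibly with $\Gamma_p$, since $G_p$ and $\Gamma_p$ need not commute in an obvious way and $T_p\calO$ is itself a quotient $T_{\bar p}\tilde{\calU}/\Gamma_p$. I would resolve this by working upstairs in the local cover: lift the $G_p$-action to an action on $\bbR^n$ commuting with (or normalizing) $\Gamma_p$, apply the slice decomposition $\bbR^n = \mathfrak{g}\cdot\bar p \oplus \nu_{\bar p}$ there, and then descend, checking that the radial cone structure is preserved under $\Gamma_p$.

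Finally, the ``sufficiently small $r$'' is chosen below the orbifold injectivity radius along the orbit so that $\exp$ is a diffeomorphism of charts, and compactness of $G$ guarantees this $r$ can be taken uniform along $G(p)$. Assembling the equivariant local diffeomorphisms into the global statement $B_r(G(p))\cong G\times_{G_p}\Cone(\nu_p)$ then follows as in the manifold case, completing the proof.
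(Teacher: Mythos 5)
Your proposal is correct and follows essentially the same route as the paper: mimic the manifold proof via the map $\phi(g,v)=g(\exp_p(v))$ with inverse built from the nearest point on the orbit, handling the orbifold subtleties by working in the local cover at $p$. The paper's precise form of your ``work upstairs'' step is the observation that $G(p)\subset\calS(p)$, so the orbit lifts uniquely into $\Fix(\Gamma_p)$; hence $\Gamma_p$ fixes all directions along the orbit, the two notions of orthogonality agree on $\nu_p$, and $\nu_p=S^{k+l-1}/\Gamma_p$ with $\Gamma_p$ fixing the $\bbR^l$ of orbit directions.
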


\begin{rmk}
In the course of the proof we will also see that it does not matter which definition of orthogonality we use.
\end{rmk}

\begin{proof}
We begin with the observation that $G(p)\subset\calS(p)$, since $g(p)$ must lie in a stratum with the same orbifold group, and $G(p)$ is connected. In particular, this tells us that $G(p)$ lifts uniquely to the local manifold cover at $p$, since $\calS(p)$ must lift to $\Fix(\Gamma_p)$ in the local cover. Furthermore, with respect to this lift, $\Gamma_p\subset O(k)\subset O(n)$, where $k$ is the codimension of $\calS(p)$. Let $l$ be the codimension of $G(p)$ inside $\calS(p)$, then $\nu_p = S^{k+l-1}/\Gamma_p$, with $\Gamma_p$ fixing $\bbR^l$. This is defined independent of our choice of definition of orthogonality, since any direction along $G(p)$ lifts uniquely.

These observations allow us to approach the proof for the orbifold case in the same fashion as the manifold case. We define $\phi:G\times Cone(\nu_p)\to\calO$ as
\[
\phi(g,v)=g(\exp_p(v)),
\]
this map has fiber $G_p$, and so induces a map $G\times_{G_p} Cone(\nu_/p) \to\calO$. To get an inverse map, for $q$ close to $G(p)$, we take $q_0$ to be the point on $G(p)$ closest to $q$, and let $g_0\in G$ be such that $g_0(p)=q_0$. This choice is unique up to elements of $G_p$. We then consider $v_0\in\Cone(\nu_p)\subset T_p\calO$ such that $\exp_p(v_0)=g_0^{-1}(q)$. Clearly, $\phi(g_0,v_0)=q$.
\end{proof}

We end this section with a result of Satake \cite{SVM}, which is the first paper that introduced orbifolds (although the initial terminology used in that paper is V-manifolds).

\begin{prop}[Orbifold Poincar\'e Duality (\cite{SVM})]\label{PropOPD}
Let $\calO^n$ be an orientable compact orbifold, then
\[
H_k(|\calO|;\bbR) = H^{n-k}(|\calO|;\bbR).
\]
\end{prop}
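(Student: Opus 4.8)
The plan is to follow Satake's original route via de Rham theory for V-manifolds, reducing Poincar\'e duality to Hodge theory on the orbifold. First I would introduce the complex of orbifold differential forms $\Omega^\bullet(\calO)$: in each local chart $\bbR^n/\Gamma$ a $k$-form is a $\Gamma$-invariant smooth $k$-form on $\bbR^n$, and these are required to agree on overlaps. The de Rham operator $d$ is $\Gamma$-equivariant, so it descends, and Satake's de Rham theorem for V-manifolds identifies the cohomology of this complex with the singular cohomology of the underlying space, $H^\bullet_{dR}(\calO)\cong H^\bullet(|\calO|;\bbR)$. The reason this holds with real coefficients is the averaging (transfer) argument: over a field of characteristic zero the cohomology of $\bbR^n/\Gamma$ is the $\Gamma$-invariant part of the cohomology of $\bbR^n$, so each chart is acyclic and a Mayer--Vietoris / sheaf argument patches the local computations together.

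Next I would put a Riemannian orbifold metric on $\calO$ (which exists by an orbifold partition of unity) and use the chosen orientation to build a Hodge star operator $*\colon\Omega^k(\calO)\to\Omega^{n-k}(\calO)$. In each chart the metric and volume form can be taken $\Gamma$-invariant, so $*$ is $\Gamma$-equivariant and descends to orbifold forms; orientability is exactly what guarantees a global volume form on $\calO^{reg}$. Setting $\delta=\pm *d*$ and $\Delta=d\delta+\delta d$, I would invoke elliptic theory: because the orbifold is compact and the analysis in each chart is just $\Gamma$-invariant elliptic theory on a manifold, one obtains a Hodge decomposition $\Omega^k(\calO)=\mathcal{H}^k\oplus\operatorname{im}d\oplus\operatorname{im}\delta$ with each de Rham class represented by a unique harmonic form, so $H^k_{dR}(\calO)\cong\mathcal{H}^k$ is finite dimensional.

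To conclude, I would observe that $*$ commutes with $\Delta$ and therefore restricts to an isomorphism $\mathcal{H}^k\xrightarrow{\ \cong\ }\mathcal{H}^{n-k}$, giving $H^k(|\calO|;\bbR)\cong H^{n-k}(|\calO|;\bbR)$. Finally, since real coefficients form a field and all groups are finite dimensional, the universal coefficient theorem makes homology and cohomology dual, $H_k(|\calO|;\bbR)\cong H^k(|\calO|;\bbR)$; combining the two isomorphisms yields $H_k(|\calO|;\bbR)\cong H^{n-k}(|\calO|;\bbR)$, as claimed.

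The main obstacle is justifying that Hodge theory genuinely transfers to the orbifold setting: one must check that the integration pairing $([\alpha],[\beta])\mapsto\int_\calO\alpha\wedge\beta$ is well defined even though the singular strata are not smooth points, and that elliptic regularity and the finite-dimensionality of the kernel hold on $|\calO|$. Both rest on the same local observation used throughout this paper --- that near any point the orbifold is $\bbR^n/\Gamma$ with $\Gamma$ finite, so every analytic construction is obtained by restricting a manifold construction to $\Gamma$-invariant objects, and the singular set has measure zero and hence contributes nothing to the integral. It is worth flagging that this argument is special to real (characteristic zero) coefficients: the averaging step fails over $\bbZ$, which is precisely why torsion can appear in $H^3(|\calO|;\bbZ)$ in Theorem \ref{ThmTop}.
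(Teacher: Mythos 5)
Your proof is correct, but it takes a genuinely different route from the one the paper indicates. The paper's remark after the proposition points to a purely topological argument: since each $\Gamma_p\subset SO(n)$ is finite, the transfer/averaging argument gives $H^*(S^{n-1}/\Gamma_p;\bbR)=H^*(S^{n-1};\bbR)$, so the link of every point of $|\calO|$ is a real homology sphere; hence $|\calO|$ is a compact orientable $\bbR$-homology manifold, and the duality $H_k(|\calO|;\bbR)\cong H^{n-k}(|\calO|;\bbR)$ follows from the general Poincar\'e duality theorem for homology manifolds. You instead follow Satake's original analytic route: orbifold de Rham complex, de Rham theorem via acyclicity of charts (again an averaging argument over $\bbR$), then Hodge theory and the star operator on harmonic forms, finishing with the universal coefficient theorem over a field. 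Both proofs are valid and both isolate the same two essential hypotheses --- orientability (so that $*$, respectively the fundamental class, exists globally) and characteristic-zero coefficients (so that averaging over the finite $\Gamma_p$ kills the local contribution of the singular set). The homology-manifold argument is shorter once one quotes the duality theorem for homology manifolds and requires no analysis on the singular strata; your Hodge-theoretic argument costs more machinery (elliptic regularity and the Hodge decomposition must be checked to descend to $\Gamma$-invariant objects) but yields more, namely harmonic representatives and a nondegenerate intersection pairing on $H^*_{dR}(\calO)$. Your closing observation that the averaging step is exactly what fails over $\bbZ$, accounting for the torsion in $H^3(|\calO|;\bbZ)$ in Theorem \ref{ThmTop} and in the lens-space suspensions $\calL^4_{p;q}$, is the right way to see why the statement is sharp.
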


The proof relies on the fact that if $\Gamma\subset SO(n)$ finite, then $H^*(S^{n-1}/\Gamma;\bbR) = H^*(S^{n-1};\bbR)$ \cite{GrTohoku}.

\section{Examples}\label{SecEx}

In this section we provide some examples of 4-dimensional orbifolds with isometric $S^1$ actions. We also see that there can be many such orbifolds with the same underlying space, but different singular structures. We also provide an example of a family of 4-orbifolds where $\bbZ$-valued Poincar\'e Duality does not hold.

\begin{ex}[Weighted Projective Spaces]\label{ExWPS}
Let $\lambda_0,\lambda_1,\lambda_2$ be positive integers such that
\[
\gcd(\lambda_0,\lambda_1,\lambda_2)=1.
\]
We define an $S^1$ action on $S^5\subset\bbC^3$ by
\[
z\star(w_0,w_1,w_2) = (z^{\lambda_0}w_0,z^{\lambda_1}w_1,z^{\lambda_2}w_2).
\]
The quotient space $S^5/S^1$ is an orientable 4-dimensional orbifold, known as a weighted projective space, denoted $\CP^2[\lambda_0,\lambda_1,\lambda_2]$, or $\CP^2[\lambda]$ for short.

As with smooth projective spaces, we use homogeneous coordinates, i.e $[w_0:w_1:w_2]$ denotes the orbit of $(w_0,w_1,w_2)$.

A typical question when studying orbifolds is: what is the orbifold structure of this space? i.e. what are the singular points, and what are the corresponding orbifold groups?

For $\CP^2[\lambda]$, the following are all the possible non-trivial orbifold groups:
\begin{align*}
\Gamma_{[1:0:0]}&=\bbZ_{\lambda_0}&
\Gamma_{[0:1:0]}&=\bbZ_{\lambda_1}&
\Gamma_{[0:0:1]}&=\bbZ_{\lambda_2}\\
\Gamma_{[w_0:w_1:0]}&=\bbZ_{(\lambda_0,\lambda_1)}&
\Gamma_{[w_0:0:w_2]}&=\bbZ_{(\lambda_0,\lambda_2)}&
\Gamma_{[0:w_1:w_2]}&=\bbZ_{(\lambda_1,\lambda_2)}.
\end{align*}
As we can see, $\CP^2[\lambda]$ has a singular set consisting of up to three points corresponding to $[1:0:0],[0:1:0],[0:0:1]$ and of up to three (possibly singular) $S^2$'s connecting pairs of such points, which correspond to $[w_0:w_1:0],[w_0:0:w_2],[0:w_1:w_2]$.

The metric on $\CP^2[\lambda]$ induced by the round metric on $S^5$ has positive sectional curvature. Furthermore, the natural action by $T^3$
\[
(z_0,z_1,z_2)\star[w_0:w_1:w_2]=[z_0w_0:z_1w_1:z_2w_2]
\]
has ineffective kernel $S^1=\{(z^{\lambda_0},z^{\lambda_1},z^{\lambda_2})\}$, and hence induces an isometric $T^2$ action on $\CP^2[\lambda]$.

One can now ask what the stratification of $\CP^2[\lambda]$ is that is induced by an $S^1$ action, where $S^1\subset T^2$. The fixed point set of an $S^1$ action on $\CP^2[\lambda]$ can be either three isolated points, or an isolated point and a (possibly singular) $S^2$. The former corresponds to a generic $S^1$ action, and the fixed points are precisely $[1:0:0],[0:1:0],[0:0:1]$. The latter case has $[1:0:0],[0:w_1:w_2]$ (or similar pairs) as its fixed point set, and corresponds to $S^1$ actions that can be written as $(z,1,1)\in T^3$.

We finish this example by observing that since $\CP^2[\lambda] = S^5/S^1$, the exact homotopy sequence for orbifolds fibrations implies that $\pi_1^{orb}(\CP^2[\lambda])=0$. Also, using Mayer-Vietoris, one can observe that $H^*(|\CP^2[\lambda]|;\bbZ)=H^*(\CP^2;\bbZ)$. Furthermore, $|\CP^2[\lambda]|=\CP^2$ iff $\lambda_0=ab,\lambda_1=ac,\lambda_2=bc$.
\end{ex}

\begin{ex}\label{ExL5}
Consider $\calO=\CP^2[1,2,4]$. and let $S^1$ act on it by
\[
z\star[w_0:w_1:w_2]=[zw_0:w_1:w_2] = [w_0:\bar{z}^2 w_1: \bar{z}^4 w_2],
\]
which has ineffective kernel $z=\pm1$.

The fixed point set consists of an isolated point: $[1:0:0]$, and a singular $S^2$: $\{[0:w_1:w_2]\}$. To clearly see the representation of $S^1$ on a neighborhood of $[1:0:0]$, we re-write this action in an effective way as
\[
u\star[w_0:w_1:w_2]=[w_0:uw_1:u^2w_2],
\]
where one can think of $u$ as $\bar{z}^2$. Since the tangent space at $[1:0:0]$ is spanned by $(0,z,w)$, we observe that the action of $S^1$ on this space has the isotropy representation equivalent to $\phi_{1,2}$, where $\phi_{k,l}$ is the action of $S^1$ on $\bbC^2=\bbR^4$ given by $S^1 = \{(z^k,z^l)\} \subset T^2$.
\end{ex}

This example demonstrates something that can not happen in the manifold case, since Hsiang and Kleiner (Lemma 5 in \cite{HK4Dim}) show that if the fixed point set contains an isolated point and a 2-dimensional component, then the isotropy representation of $S^1$ on a neighborhood of the isolated point has to be $\phi_{1,1}$. In particular, in this case the proof of \cite{HK4Dim} can not immediately be generalized to orbifolds.

\begin{ex}\label{ExHitch}
Another interesting family of examples are the Hitchin family of orbifolds introduced in \cite{HEinst}. Recall that a Hitchin orbifold, which we will denote $H_k$, has $S^4$ as its underlying space, and its singular locus consists of a smooth Veronese $\RP^2$ with a $\bbZ_k$ orbifold group. In particular, we view $S^4$ as the set of traceless symmetric 3x3 matrices with unit norm, on which $SO(3)$ acts by conjugation. We can view each orbit as the space of matrices with fixed eigenvalues.

The singular orbits of the $SO(3)$ action are precisely two copies of $\RP^2$, corresponding to the matrices with repeated positive or negative eigenvalues. To construct the Hitchin $k$-orbifold $H_k$, we introduce a $\bbZ_k$ singularity along one of the $\RP^2$ orbits. One way of interpreting this is to replace the existing $D^2$ bundle over $\RP^2$ by a $D^2/\bbZ_k$ cone-bundle over $\RP^2$.

Next, consider the action of $SO(3)$ on $\CP^2$ induced by the canonical embedding $SO(3)\subset SU(3)$. Recall that there exists a branched cover $\CP^2\to S^4$ where we identify $[w_0:w_1:w_2]$ with $[\bar{w_0}:\bar{w_1},\bar{w_2}]$, and this cover is an $SO(3)$-equivariant continuous map. If we impose a $\bbZ_k$ singularity along $\RP^2$, we obtain the universal cover of $H_{2k}$ ($\bar{H_{2k}}$).

The Hitchin metric on $H_k$ is self-dual Einstein, but has some negative curvature unless $k=1,2$, see \cite{ZCoho}, where $H_1$ is the standard $S^4$, and $H_2 = \CP^2/\bbZ_2$, where $\bbZ_2$ acts by conjugation. Furthemore, one can view $H_3 = S^7/SU(2)$, where $SU(2)$ acts by the irreducible representation on $\bbC^4\supset S^7$.

We note that a Hitchin orbifold has $\pi_1^{orb}(H_k)=0$ when $k$ is odd. When $k$ is even, $\pi_1^{orb}(H_k)=\bbZ_2$ and $H_k$ is double covered by $\CP^2$ with a singular $\RP^2$ where the orbifold group is $\bbZ_{k/2}$, with the cover given by the map $\CP^2\to S^4$ given by identifying $[z]$ with $[\bar{z}]$, where the branching locus is $\RP^2$.

Hitchin orbifolds are of interest in particular because the two infinite families of 7-dimensional candidates for cohomogeneity one manifolds with positive curvature ($P_k,Q_k$) can be described as bundles over the Hitchin orbifolds up to covers; namely, $S^3\to P_k\to H_{2k-1}$ and $S^3\to Q_k\to \bar{H_{2k}}$ (see \cite{GWZCoho} for the general construction, \cite{GVZPosC}, \cite{DP2} for positive curvature on $P_2$, and \cite{ZCoho} for an overview). It is conjectured that all manifolds $P_k,Q_k$ admit positive curvature (see \cite{ZExam}). Also, it is known that all $P_k,Q_k$ admit non-negative sectional curvature, hence so do all $H_k$.

As has been remarked in \cite{GVZPosC}, one can use Cheeger deformation to obtain a metric with positive sectional curvature on $H_k$. To do this carefully, one must utilize the work of M\"uter \cite{MuTh} to study the assymptotic behavior of the deformation.

Any circle $S^1\subset SO(3)$ still acts by isometries. Specifically let
\[
S^1 = \left\{\begin{pmatrix}
\cos t & -\sin t & 0\\
\sin t & \cos t & 0\\
0 & 0 & 1
\end{pmatrix} :t\in[0,2\pi)\right\}\subset SO(3).
\]
Since the $SO(3)$ action on each singular orbit is the standard $SO(3)$ action on $\RP^2$, the $S^1$ fixes two points, one in each of the singular orbits of the $SO(3)$ action. In particular, it fixes
\[
\begin{pmatrix}
\frac{1}{\sqrt{6}} &&\\
& \frac{1}{\sqrt{6}} &\\
&& \frac{-2}{\sqrt{6}}
\end{pmatrix}\;and\;
\begin{pmatrix}
\frac{-1}{\sqrt{6}} &&\\
& \frac{-1}{\sqrt{6}} &\\
&& \frac{2}{\sqrt{6}}
\end{pmatrix}.
\]
We can view the $S^1$ action as a suspension of an $S^1$ action on $S^3$. Indeed, if we view traceless 3x3 symmetric matrices in $S^4$ as
\[
\begin{pmatrix}
A & v\\
v^T & h
\end{pmatrix}\qquad\text{with } A=\begin{pmatrix}
-h/2+t & b\\
b & -h/2-t
\end{pmatrix},
\qquad\tr A + h = 0.
\]
Here $v=\begin{pmatrix} c \\ d \end{pmatrix}$ is a vector in $\bbR^2$, and $h$ is the suspension parameter. Observe that
\[
t^2+b^2+c^2+d^2=\frac{2-3h^2}{4}\qquad\text{and hence}\; h\in[-2/\sqrt{6},2/\sqrt{6}].
\]
Thus, we have a 3-sphere when $h\in(-2/\sqrt{6},2/\sqrt{6})$, and $t^2+b^2+c^2+d^2=0$ when $h=\pm2/\sqrt{6}$, so the sphere collapses to a point.

Suppose that the singular locus is the $\RP^2$ correpsonding to the matrices with eigenvalues $1/\sqrt{6}$, $1/\sqrt{6}$ and $-2/\sqrt{6}$. Then, conjugating $\diag(1/\sqrt{6},1/\sqrt{6},-2/\sqrt{6})$ we can see that this $\RP^2$ intersects only the spheres with $h\in[-2/\sqrt{6},1/\sqrt{6}]$. This intersection is precisely one orbit of the $S^1$ action (which acts as $\phi_{1,2}$ on the $S^3$'s, which can be seen from the $S^1$ action on $A,v$), and as we approach the last $S^3$ where the intersection is non-empty, this $S^1$ turns into the singular orbit.
\end{ex}

\begin{ex}
Let $L^3(p;q)$ be a 3-dimensional lens space $S^3/\bbZ_p$ with $[n]$ acting by multiplication by $(z^n,z^{qn})$ on $\bbC^2\supset S^3$, where $z$ is a primitive $p^{th}$ root of unity.

Let $\calL^4_{p;q}$ be the suspension of $L^3(p;q)$, which can be viewed as $S^4/\bbZ_p$, with $S^4\subset \bbC^2\oplus\bbR$ and $\bbZ_p$ acting trivially on the last coordinate. Then, $\pi_1^{orb}(\calL^4_{p;q})=\bbZ_p$, and
\[
H^k(|\calL^4_{p;q}|;\bbZ) = \begin{cases}
\bbZ,&k=0,4\\
0,&k=1,2\\
\bbZ_p,&k=3,
\end{cases}\hspace{2cm}
H_k(|\calL^4_{p;q}|;\bbZ) = \begin{cases}
\bbZ,&k=0,4\\
0,&k=1,3\\
\bbZ_p,&k=2.
\end{cases}
\]
Therefore, $\bbZ$-valued Poincar\'e Duality does not hold, but $\pi_1^{orb}(\calL^4_{p;q})$ bounds the size of the torsion group as claimed in Theorem \ref{ThmTop}.
\end{ex}

\section{General Structure}\label{SecStruct}

In this section, we focus specifically on the structure of 4-dimensional orbifolds with isometric $S^1$ action. We generally make no assumptions about the curvature.

\begin{lem}[Kobayashi]\label{LemKob}
Let $\calO^n$ be a compact Riemannian orbifold with an isometric $S^1$ action. Let $\calF$ be the suborbifold of fixed points of this action. Then,

\begin{enumerate}
\item Each connected component of $\calF$ is a totally geodesic suborbifold of even codimension.

\item $\chi(|\calF|)=\chi(|\calO|)$.
\end{enumerate}
\end{lem}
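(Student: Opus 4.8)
The plan is to treat the two assertions separately: first pin down the local structure of $\calF$ using the same chart-lifting idea as in the Slice Theorem above, and then compute Euler characteristics via a localization argument on the underlying space.

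For part (1), I would work in a local manifold cover around a fixed point $p\in\calF$. Fix a chart $\tilde U=\bbR^n$ with $\tilde U/\Gamma_p=U$ a neighborhood of $p$. Because $S^1$ is connected and fixes $p$, the action near $p$ lifts to an isometric action of a compact Lie group $\tilde G$ on $\tilde U$ fitting into an extension $1\to\Gamma_p\to\tilde G\to S^1\to1$; thus $\dim\tilde G=1$ and its identity component $\tilde G_0$ is a circle acting linearly, since it fixes the lift $\tilde p$ of $p$ and is isometric. As $\tilde G_0$ is normal in $\tilde G$, the subgroup $\Gamma_p$ normalizes $\tilde G_0$ and therefore preserves the fixed subspace $V:=\Fix(\tilde G_0)\subset\bbR^n$. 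A path-lifting argument shows that $q=\pi(\tilde q)$ near $p$ is $S^1$-fixed in $U$ precisely when the connected orbit $\tilde G_0\cdot\tilde q$ lies in the discrete set $\Gamma_p\cdot\tilde q$, i.e. when $\tilde q\in V$. Hence $\calF\cap U=V/(\Gamma_p|_V)$ is an orbifold chart exhibiting $\calF$ as a suborbifold.

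Two properties then drop out of this linear model. First, $V$ is the fixed-point set of a group of linear isometries, so it is totally geodesic in $\tilde U$, and this property descends to the quotient; hence each connected component of $\calF$ is totally geodesic. Second, $\tilde G_0\cong S^1$ acts on the orthogonal complement $V^\perp$ with no nonzero fixed vector, which forces $V^\perp$ to split into two-dimensional rotation blocks; therefore $\dim V^\perp$, which equals the codimension of $\calF$ at $p$, is even. For part (2), I would use additivity of the Euler characteristic on the compact, triangulable space $|\calO|$ to write $\chi(|\calO|)=\chi(|\calF|)+\chi_c(|\calO|\setminus|\calF|)$, where $\chi_c$ is the compactly supported Euler characteristic and $\chi(|\calF|)=\chi_c(|\calF|)$ since $\calF$ is compact. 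On $|\calO|\setminus|\calF|$ every orbit is one-dimensional, i.e. a circle (topologically $S^1$, possibly as $S^1/\text{finite}$); stratifying by orbit type and passing to the orbit space presents each stratum as a bundle with circle fibers, so multiplicativity of $\chi_c$ together with $\chi_c(S^1)=0$ yields $\chi_c(|\calO|\setminus|\calF|)=0$, whence $\chi(|\calF|)=\chi(|\calO|)$.

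The main obstacle is the bookkeeping in part (2): one must ensure that $|\calO|$ and the orbit space of the non-fixed part carry finite triangulated (or CW) structures for which $\chi_c$ is genuinely additive over the closed/open decomposition and multiplicative over the circle-orbit fibrations, since $|\calO|$ need not be a manifold. The lifting step in part (1)—verifying that the $S^1$-action lifts to the chart as a circle normalized by $\Gamma_p$—is the other point that needs care, but it parallels the argument that $\calS(p)$ lifts to $\Fix(\Gamma_p)$ used in the Slice Theorem.
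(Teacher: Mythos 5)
Your argument is correct, and it is worth separating the two parts in the comparison. For part (1) you follow essentially the same route as the paper: the paper simply cites Kobayashi's manifold proof (pp.~59--61 of the reference given there) together with representation theory, and your chart-level version --- lifting the action to an extension $1\to\Gamma_p\to\tilde G\to S^1\to 1$, identifying $\calF$ locally with $\Fix(\tilde G_0)/\Gamma_p$, and reading off total geodesy and even codimension from the linear model --- is exactly the content of that citation, spelled out and adapted to the orbifold chart. For part (2) you take a genuinely different route. The paper reduces to Kobayashi's original manifold theorem stratum by stratum: it observes that his proof only needs an $\epsilon$-separation of the components of the fixed set, verifies this for the $S^1$ action restricted to each stratum $\calS$ of the orbifold, obtains $\chi(|\calF\cap\calS|)=\chi(|\calS|)$ for every stratum, and then glues. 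You instead bypass Kobayashi's theorem entirely and argue by localization: additivity of the compactly supported Euler characteristic over the closed/open decomposition $|\calO|=|\calF|\sqcup(|\calO|\setminus|\calF|)$, plus vanishing of $\chi_c$ on the non-fixed part because it is stratified by circle bundles. Your approach is more self-contained and arguably cleaner, since the ``gluing'' step in the paper is itself an additivity-of-$\chi$ argument over the stratification and so carries the same bookkeeping burden you flag; the paper's approach buys the ability to quote a classical result verbatim on each stratum. The technical points you would need to nail down --- that $|\calO|$ and the orbit-type strata are triangulable so that $\chi_c$ is additive, and that the slice theorem makes each stratum of $|\calO|\setminus|\calF|$ a locally trivial circle-fiber family over its orbit space (finitely many orbit types by compactness) --- are all standard and present, in only slightly disguised form, in the paper's own sketch.
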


\begin{proof}
Part 1 is proved completely analogously to the proof for manifold case (See pp 59-61 of \cite{KTrans}), and follows from Representation Theory.

For part 2, we note that Kobayashi's original proof in \cite{KFix} only requires compactness to guarantee that the fixed point set can not be ``dense'' i.e. there exists $\epsilon>0$ such that $\epsilon$ neighborhoods of connected components of the fixed point set are disjoint. This condition is satisfied when we consider $S^1$ action on strata of a compact orbifold, since such strata are bounded, and the boundary of each stratum lies inside the orbifold. As such, what we have is $\chi(|\calF\cap\calS|) = \chi(|\calS|)$ for each stratum $\calS\subset\calO$. Gluing the $\calF\cap\calS$ pieces together we get part 2.
\end{proof}

As a consequence of this, we get

\begin{lem}
Let $\calO$ be a compact positively curved 4-dimensional Riemannian orbifold with a non-trivial isometric $S^1$-action, and $\calF$ be the set of fixed points of the action. Then, $\calF$ is non-empty and either consists of 2 or more isolated points, or has at least one 2-dimensional component.
\end{lem}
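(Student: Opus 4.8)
The whole statement reduces to a count of Euler characteristics, so the plan is to exploit Lemma~\ref{LemKob} in both of its parts. Part~(1) tells us that every connected component of $\calF$ has even codimension; since the action is non-trivial and $\calO$ is four-dimensional, the only possibilities are isolated points (codimension~$4$) and $2$-dimensional components (codimension~$2$), the codimension-$0$ case being excluded by non-triviality. Thus the dichotomy in the statement is equivalent to showing that $\calF$ is non-empty and that, whenever $\calF$ has no $2$-dimensional component, it contains at least two points. Part~(2) converts this into a single inequality: since $\chi(|\calF|)=\chi(|\calO|)$, and for a fixed-point set consisting only of isolated points $\chi(|\calF|)$ is exactly the number of those points, it suffices to prove $\chi(|\calO|)\ge 2$.

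To obtain this bound I would compute the real Betti numbers of $|\calO|$. Assuming $\calO$ orientable, the even-dimensional half of Synge's Theorem (Proposition~\ref{PropSynge}(1)) gives that $|\calO|$ is simply connected, so $b_1=0$; orbifold Poincar\'e duality over $\bbR$ (Proposition~\ref{PropOPD}) then yields $b_3=b_1=0$ and $b_0=b_4=1$, whence
\[
\chi(|\calO|)=b_0-b_1+b_2-b_3+b_4=2+b_2\ge 2.
\]
Feeding this into Lemma~\ref{LemKob}(2) gives $\chi(|\calF|)\ge 2$, which immediately forces $\calF\neq\emptyset$ and rules out a single isolated fixed point, yielding the stated dichotomy.

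The one delicate point is orientability, which is needed both to invoke the even-dimensional case of Synge's Theorem and to apply orbifold Poincar\'e duality. In the setting of the Main Theorem this is automatic: $\pi_1^{orb}(\calO)=0$ means $\calO$ admits no non-trivial connected orbifold cover, in particular no orientation double cover, so $\calO$ is orientable. This hypothesis genuinely cannot be dropped, and it is the main obstacle to a fully general statement: the round $\RP^4$ with the $S^1$-action rotating $\bbC^2\subset\bbC^2\oplus\bbR$ has $\chi=1$ and exactly one isolated fixed point, since the two poles of $S^4$ are identified. Passing to the orientation double cover does not repair this, as the deck involution may interchange the two fixed points upstairs; hence I would carry out the argument under the orientability assumption supplied by the ambient hypotheses.
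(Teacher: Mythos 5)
Your argument is essentially the paper's own: the paper proves this lemma precisely by combining Lemma~\ref{LemKob} with Synge's theorem (Proposition~\ref{PropSynge}) and orbifold Poincar\'e duality (Proposition~\ref{PropOPD}) to force $\chi(|\calO|)\geq 2$, hence at least two fixed points when $\calF$ consists only of isolated points. Your remark that orientability is genuinely required --- with $\RP^4$ and its single-fixed-point rotation as the obstruction --- is a correct observation about a hypothesis the paper leaves implicit, justified there only because the lemma is applied under the assumption $\pi_1^{orb}(\calO)=0$ of the Main Theorem.
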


The proof is identical to that for manifolds, the only challenge is to verify that if $\calF$ consists of only isolated points, then $|\calF|\geq 2$. To do this we utilize Orbifold Poincar\'e Duality \ref{PropOPD}, and Synge's theorem \ref{PropSynge}

We also have additional structural restrictions on 4-dimensional orbifolds with
an isometric $S^1$ action.

\begin{prop}\label{OrbiGrp}
Let $\calO$ be a 4-dimensional Riemannian $S^1$-orbifold, and let $x\in\calO$ be a singular point. Then, we have either

\begin{align*}
\Gamma_x&\subset U(2)\subset SO(4),&&\textit{if x is a fixed point, or}\\
\Gamma_x&\subset SO(3)\subset SO(4),&&\textit{otherwise}.
\end{align*}

In particular, a neighborhood of $x$ is homeomorphic to a disk unless $x$ is a fixed point.
\end{prop}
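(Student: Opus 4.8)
The statement concerns the orbifold group $\Gamma_x \subset SO(4)$ at a singular point $x$ of a 4-dimensional orbifold carrying an isometric $S^1$-action. The plan is to exploit the fact that $S^1$ commutes with the $\Gamma_x$-action on the local cover, so that the isotropy representation of $S^1$ on $T_x\calO$ lifts to the tangent space $T_{\tilde x}\tilde\calU \cong \bbR^4$ and its image lies in the centralizer of $\Gamma_x$ inside $SO(4)$. First I would set up the local picture using the Slice Theorem: at $x$ we have a manifold chart $\bbR^4/\Gamma_x$ with $\Gamma_x \subset SO(4)$ (using orientability, or simply that the group lies in $O(4)$ in general), and the $S^1$-action near $x$ lifts to a linear $S^1$-action on $\bbR^4$ commuting with $\Gamma_x$. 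Thus $\Gamma_x$ lies in the centralizer $Z_{SO(4)}(S^1)$ of the image of this lifted circle.

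The two cases then correspond to whether $x$ is a fixed point of $S^1$ or not. If $x$ is \emph{not} fixed, the lifted $S^1$ acts nontrivially on $\bbR^4$, and since it commutes with $\Gamma_x$, the $S^1$-orbit direction through $\tilde x$ gives a nonzero vector fixed (up to the circle rotation) — more precisely, the orbit tangent provides a trivial $\Gamma_x$-subrepresentation, so $\Gamma_x$ fixes a line and hence acts on the orthogonal $\bbR^3$, giving $\Gamma_x \subset SO(3) \subset SO(4)$. The key computation is identifying the centralizer of a nontrivial circle in $SO(4)$ and checking that it forces a fixed vector; here the irreducible real subrepresentations of the $S^1$-factor are 2-dimensional, and commuting with a nontrivial rotation pins $\Gamma_x$ down to the summand fixed by $S^1$. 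If $x$ \emph{is} a fixed point, then $S^1$ acts on $T_x\calO$ with $x$ fixed, and I would invoke the standard fact that a circle acting linearly and effectively on $\bbR^4$ fixing the origin determines a complex structure making the action unitary; its centralizer in $SO(4)$ is then contained in $U(2)$, whence $\Gamma_x \subset U(2)$.

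The final sentence — that a neighborhood is homeomorphic to a disk unless $x$ is a fixed point — follows from the first part: in the non-fixed case $\Gamma_x \subset SO(3)$ acts on $\bbR^3$ (fixing the orbit line), so the cone $\bbR^3/\Gamma_x$ is a cone over the spherical space form $S^2/\Gamma_x$. Since $\Gamma_x \subset SO(3)$ is finite, $S^2/\Gamma_x$ is a 2-sphere (as $\chi(S^2/\Gamma_x) = 2/|\Gamma_x|$ combined with the classification of spherical 2-orbifolds, or directly because any quotient of $S^2$ by a finite subgroup of $SO(3)$ acting freely would violate the Euler characteristic, and the quotient of $S^2$ as a topological space by any finite rotation group is again $S^2$), so $\bbR^3/\Gamma_x$ is homeomorphic to $\bbR^3$ and the chart $\bbR^4/\Gamma_x \cong \bbR \times (\bbR^3/\Gamma_x)$ is a disk.

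I expect the main obstacle to be the careful treatment of the fixed-point case, specifically justifying that an effective isometric circle action fixing $x$ yields $\Gamma_x \subset U(2)$ rather than merely $\Gamma_x \subset SO(4)$: one must argue that the $S^1$-invariant complex structure on $T_x\calO$ is preserved by $\Gamma_x$ (which follows from commutativity), and handle the possibility that the $S^1$-representation has a trivial summand, in which case the complex structure is not unique and one must choose it compatibly with the effective action. The subtlety flagged in Example~\ref{ExL5} — that the isotropy representation at an isolated fixed point need not be $\phi_{1,1}$ — is a reminder that, unlike the manifold case, one cannot assume a rigid normal form, so the argument must rely only on the centralizer structure and not on any restriction coming from the presence of a 2-dimensional fixed component.
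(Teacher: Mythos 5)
Your proposal is correct and follows essentially the same route as the paper: lift the $S^1$-action to the tangent space $\bbR^4$ of the local cover, observe that the resulting circle commutes with $\Gamma_x$, and conclude in the fixed-point case that $\Gamma_x$ lies in the centralizer of a nontrivial circle in $SO(4)$, which is contained in a $U(2)$, while in the non-fixed case the orbit direction gives a $\Gamma_x$-fixed vector, forcing $\Gamma_x\subset SO(3)$. The only cosmetic differences are that the paper computes the centralizer via $Spin(4)=S^3\times S^3$ whereas you use invariant complex structures, and that you additionally spell out the final disk claim (via $S^2/\Gamma_x\cong S^2$), which the paper leaves implicit.
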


\begin{proof}
Suppose $x$ is a fixed point. Consider the action of $S^1$ on $\bbR^4/\Gamma_x = T_x\calO$. Let $V$ denote the vector field associated to the $S^1$ action on $T_x\calO$, and $\tilde{V}$ its lift to $\bbR^4$.

$\tilde{V}$ is a vector field associated to an action of $\bbR$ on $\bbR^4$, furthermore, $t_0=2\pi$ acts as an element of $\Gamma_x$. Therefore, $T=2\pi n$ acts trivially for some $n\in\bbZ^+$, so the action is an $S^1$ action.

Up to conjugation, this $S^1$ action must be equal to $(e^{lti},e^{mti})\in T^2\subset S^3\times S^3 = Spin(4)$ (here we consider $Spin(4)$ instead of $SO(4)$ for convenience). Also, this $S^1$ must normalize $\Gamma_x$, and so must commute with $\Gamma_x$. This leaves us two cases, either $m\neq 0\neq l$ or one is zero. If neither $m$ nor $l$ is zero, then $\Gamma_x$ lifts to a subgroup of $T^2$, so $\Gamma_x\subset T^2$. Otherwise, the lift of $\Gamma_x$ lies in either $S^3\times S^1$ or $S^1\times S^3$, in both cases, we get $\Gamma_x\subset U(2)\subset SO(4)$.

Suppose $x$ is not a fixed point, then $\Gamma_x$ must fix at least one direction (along the orbit $S^1(x)$), so $\Gamma_x\subset SO(3)\subset SO(4)$.
\end{proof}

Since the orbifold group along a stratum must be constant (up to conjugacy), we conclude that

\begin{cor}
Let $\calS\subset\calO$ be a stratum that intersects $\calF$ but is not contained in it, then $\Gamma_x=\bbZ_q\subset S^1=SO(3)\cap SU(2)$ for every $x\in\calS$.
\end{cor}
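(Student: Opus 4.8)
The plan is to combine the two cases of Proposition~\ref{OrbiGrp} by comparing the local picture at a fixed point of $\calS$ with the local picture at a non-fixed point, using that the orbifold group is constant (up to conjugacy) along $\calS$.

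First I would record that $\calS$ is $S^1$-invariant: since $S^1$ acts by orbifold isometries we have $\Gamma_{g(x)} = g\Gamma_x g^{-1}$, so each $g\in S^1$ carries $\calS$ into a stratum of the same type; the assignment $g\mapsto\calS(g(x))$ is a continuous map from the connected group $S^1$ into the discrete set of strata, hence constant, giving $g(x)\in\calS$. Now fix a point $p\in\calS\cap\calF$ and a non-fixed point $x\in\calS\setminus\calF$, and let $\Gamma$ denote the common orbifold group along $\calS$. Proposition~\ref{OrbiGrp} supplies the two constraints I will reconcile: at the fixed point $p$ we have $\Gamma\subset U(2)\subset SO(4)$, acting $\bbC$-linearly on $T_p\calO=\bbR^4/\Gamma$; at the non-fixed point $x$ the group $\Gamma$ fixes the orbit direction and so $\Gamma\subset SO(3)\subset SO(4)$.

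The enabling step is a dimension count for $\calS$. Near $p$ the stratum lifts to $\Fix(\Gamma)$ in the local cover $\bbR^4$, and since $\Gamma\subset U(2)$ this fixed set is a complex subspace, so $\dim\calS\in\{0,2,4\}$. If $\Gamma$ is trivial the claim is immediate ($\Gamma_x=\bbZ_1$), so assume $\Gamma\neq\{1\}$; then $\dim\calS\neq 4$. The value $0$ is impossible because a connected $0$-dimensional stratum is a single point, which can neither intersect $\calF$ nor avoid being contained in $\calF$ simultaneously. Hence $\dim\calS=2$, so $\Fix(\Gamma)=\bbC v$ is a complex line and $\Gamma$ acts faithfully on the complementary line $\bbC w$. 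The subgroup of $U(2)$ that fixes $\bbC v$ pointwise and preserves $\bbC w$ is a circle, and a finite subgroup of it is cyclic; thus $\Gamma=\bbZ_q$ lies in this circle.

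It then remains to identify this circle with the asserted $S^1=SO(3)\cap SU(2)$. From the non-fixed picture the circle fixes the orbit direction $V$, which lies in the tangent line to $\calS$, i.e.\ in $\bbC v$, so it sits inside the $SO(3)$ stabilizing $V$; from the fixed picture it sits inside the maximal torus coming from $SU(2)\subset U(2)$. I expect this final identification to be the main obstacle: one must check that the two circle subgroups produced by the two local models coincide and are correctly described as $SO(3)\cap SU(2)$ in $SO(4)$, tracking the embeddings through the $Spin(4)=S^3\times S^3$ description used in Proposition~\ref{OrbiGrp}. Once this is in place, constancy of $\Gamma$ along $\calS$ yields $\Gamma_x=\bbZ_q\subset S^1=SO(3)\cap SU(2)$ for every $x\in\calS$.
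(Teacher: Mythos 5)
Your argument is correct and is essentially the paper's: the paper offers only the one-line justification that the orbifold group is constant (up to conjugacy) along a stratum, so that the two cases of Proposition~\ref{OrbiGrp} must be reconciled exactly as you do, and your dimension count showing $\dim\calS=2$ and the resulting cyclicity of $\Gamma_x$ is the intended (if unwritten) content. The final identification you flag as the main obstacle is really a notational imprecision in the statement itself rather than a gap in your proof: the circle you produce, $\{\mathrm{diag}(1,e^{i\theta})\}$, is $SO(3)\cap U(2)$ under the standard embeddings (literally $SO(3)\cap SU(2)$ would be trivial), and showing $\Gamma_x$ is a finite, hence cyclic, subgroup of this rotation circle is all the corollary is used for.
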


We now prove Theorem \ref{ThmTop}.

\begin{proof}[Proof of Theorem \ref{ThmTop}]
Let $\calO$ be a compact orientable 4-dimensional orbifold with a simply connected underlying space and $n = \chi(|\calO|)$, then by Poincar\'e Duality and the Universal Coefficient Theorem, we conclude that
\[
H^k(|\calO|;\bbQ) = \begin{cases}
\bbQ,&k=0,4\\
0,&k=1,3\\
\bbQ^{n-2},&k=2.
\end{cases}
\]

Also, since $\pi_1(|\calO|)=0$, and by properties of CW-complexes, we can conclude that
\[
H^k(|\calO|;\bbZ) = \begin{cases}
\bbZ,&k=0\\
0,&k=1\\
\bbZ^{n-2},&k=2\\
\tau^3(|\calO|),&k=3\\
\bbZ+\tau^4(|\calO|),&k=4,
\end{cases}
\]
(where $\tau^k(X)$ represents that torsion in $H^k(X)$.) Our next step is to show that $\tau^4(|\calO|)=0$.

Let $p_1$ be a point in $\calO$, $B_\epsilon(p_1)$ be a small open ball around $p_1$. Define $X_1 = |\calO|\setminus B_\epsilon(p_1)$. From the structure of an orbifold, we know that $X_1$ deformation retracts onto a CW-complex with no cells of dimension greater than 3, and $\chi(X_1) = n-1$. Consider the Mayer-Vietoris sequence given by $|\calO| = X_1 \bigcup\limits_{\partial B_\epsilon(p_1)} \bar{B_\epsilon(p_1)}$. We see that $H^1(X_1) = 0$ and $H^2(X_1) = \bbZ^{n-2}$, so $H^3(X_1)$ is torsion only. So, we have a short exact sequence
\[
\tau^3(X_1) \to \bbZ \to \bbZ + \tau^4(|\calO|) \to 0.
\]
Since the only possible map from a torsion group to $\bbZ$ is trivial, we must have $\bbZ\to\bbZ+\tau^4(|\calO|)$ be an isomorphism, so $\tau^4(|\calO|)=0$.

Next we show that there exists a surjective map $\phi:\pi_1^{orb}(\calO)\to\tau^3(|\calO|)$.

Let $p_1,\ldots,p_k$ be all the points in $\calO$ whose neighborhoods are not topological disks (such points are isolated in 4-orbifolds). Let $\epsilon>0$ be small, and $|X| = |\calO| \setminus \left[\cup_{i=1}^n B_\epsilon(p_i)\right]$. Then, $|X|$ is a topological 4-manifold with boundary consisting of $n$ copies of finite quotients of $S^3$.

By topology of orbifolds, $\pi_1^{orb}(X) = \pi_1^{orb}(\calO)$, and $\pi_1^{orb}(X)\to \pi_1(|X|)$ is surjective. Furthermore, $\pi_1(|X|)\to H_1(|X|)$ is also surjective by Hurewicz, and $H_1(|X|)=H^3(|X|,|\partial X|)$ by the Poincar\'e duality for manifolds with boundary.

By excision we know that $H^k(|X|,|\partial X|) = H^k(|\calO|,\cup_{i=1}^n B_\epsilon(p_i))$ for all $k$, and from the long exact sequence for relative cohomology we know that $H^k(|\calO|,\cup_{i=1}^n B_\epsilon(p_i)) = H^k(|\calO|)$ for $k\geq 2$.

Therefore, we have a surjective composition $\pi_1^{orb}(\calO)\to\pi_1^{orb}(X)\to \pi_1(|X|) \to H_1(|X|) \to H^3(|X|,|\partial X|) \to H^3(|\calO|,\cup_{i=1}^n B_\epsilon(p_i)) \to H^3(|\calO|)$.
\end{proof}

\begin{rmk}
It is worth noting that most of this proof works equally well when $\pi_1(|\calO|)$ is finite rather than trivial, then we have 2 torsion groups $H^3(|\calO|)$ and $H_1(|\calO|)$ with $\pi_1^{orb}(\calO)$ surjecting onto both. If $\pi_1(|\calO|)$ is infinite, the author is unable to rule out the possibility that $H^4(|\calO|)$ has a non-trivial torsion component.
\end{rmk}

\begin{ques}
Let $n\geq 5$. If $\calO^n$ is a compact, orientable, $n$-dimensional orbifold with $\pi_1(|\calO|)=0$ does $\pi_1^{orb}(\calO)$ provide a bound on how $\bbZ$-valued Poincar\'e Duality for $|\calO|$ fails? In particular, is it true that $\pi_1^{orb}(\calO)=0$ implies that $\bbZ$-valued Poincar\'e Duality for $|\calO|$ holds?

It is worth noting that if $n=5$, then this question comes down to the question of how $\pi_1^{orb}(\calO)$ impacts $H^4(|\calO|)$ (which is torsion only).
\end{ques}

\section{Proof of Theorem \ref{ThmMain}}\label{SecPf}

We now prove Theorem \ref{ThmMain} in 2 parts. First we consider the case when the fixed point set consists of isolated fixed points, and then we consider the case where there exists a 2-dimensional component of the fixed point set.

\subsection{Isolated Fixed Point Case}

If $y$ is an isolated fixed point, then the lift of the slice representation is equivalent to
\[
\phi_{k,l}:S^1\times\bbC^2\to\bbC^2;\qquad
e^{i\theta}\star(z_1,z_2) = (e^{ik\theta/m}z_1,e^{il\theta/n}z_2),
\]
where $k,l\in\bbZ$ are relatively prime and $(e^{2\pi i/m},e^{2\pi i/n})\in \Gamma_y$. Furthermore, $\Gamma_y = \langle (e^{2\pi i/m},e^{2\pi i/n})\rangle \oplus \tilde{\Gamma_y}$. Let $S^3(1)\subset\bbC^2$ be the unit sphere and let $d:S^3(1)\times S^3(1)\to\bbR$ be given by $d(v,w)=\angle(v,w)$. Let $(X_{kl},d_{kl})$ be the orbit space of $S^3(1)/\tilde{S^1_{kl}}$ where $\tilde{S^1_{kl}}$ is a circle that acts by $(e^{ik\theta},e^{il\theta})$. Furthermore, let $(\tilde{X_{kl}},\tilde{d_{kl}})$ be the quotient of $X_{kl}$ by $\tilde{\Gamma_y}$.

\begin{lem}\label{LemAngle}
If $x_1,x_2,x_3\in \tilde{X_{kl}}$, then
\[
\tilde{d_{kl}}(x_1,x_2)+\tilde{d_{kl}}(x_2,x_3)+\tilde{d_{kl}}(x_3,x_1) \leq\pi
\]
\end{lem}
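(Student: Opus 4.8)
The plan is to first discard the extra quotient and reduce everything to an explicit distance computation on $X_{kl}$, keeping track of two distinguished points. Since $\tilde{X_{kl}}=X_{kl}/\tilde{\Gamma_y}$, passing to the further quotient can only decrease distances: for any lifts $\bar x_i\in X_{kl}$ of the $x_i$ one has $\tilde{d_{kl}}(x_i,x_j)\le d_{kl}(\bar x_i,\bar x_j)$. Hence it suffices to prove the perimeter bound for $(X_{kl},d_{kl})=S^3(1)/\tilde{S^1_{kl}}$ itself.

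Second, I would write the distance down explicitly. Representing points of $X_{kl}$ by unit vectors $v=(a_i,b_i),\,w=(a_j,b_j)\in S^3(1)\subset\bbC^2$ and setting $A=\bar a_i a_j$, $B=\bar b_i b_j$, the quotient distance is
\[
\cos d_{kl}(x_i,x_j)=\max_{\theta}\operatorname{Re}\bigl(Ae^{ik\theta}+Be^{il\theta}\bigr).
\]
Two facts fall out at once. Because $k,l\neq 0$, the $\theta$-average of the right-hand side is $0$, so the maximum is $\ge 0$; thus $d_{kl}\le\pi/2$ and $\operatorname{diam}X_{kl}\le\pi/2$. More importantly, taking $N=[(1,0)]$ and $S=[(0,1)]$ (the cone points) gives $\cos d_{kl}(x,N)=|a|$ and $\cos d_{kl}(x,S)=|b|$, whence
\[
d_{kl}(x,N)+d_{kl}(x,S)=\arccos|a|+\arccos|b|=\tfrac{\pi}{2}
\]
for every $x=[(a,b)]$. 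In other words $N,S$ realize the diameter and every point lies on a minimizing meridian between them; this already shows the bound is sharp, since for the degenerate triangle $\{N,S,x\}$ the perimeter equals $2\,d_{kl}(N,S)=\pi$.

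Third, for the general estimate I would exhibit $X_{kl}$ as the surface of revolution $dr^2+f(r)^2\,d\psi^2$ on $r\in[0,\pi/2]$ with $f(r)=\cos r\sin r/\sqrt{k^2\cos^2 r+l^2\sin^2 r}$, so the meridians are the minimizing $N$–$S$ geodesics of length $\pi/2$ and the latitude circles are the orbits of the residual rotation. Note $f(r)\le\cos r\sin r\le\tfrac12$, i.e. $X_{kl}$ is ``thinner'' than $X_{11}=S^2(1/2)$. The complex-conjugation isometry $\sigma(a,b)=(\bar a,\bar b)$ descends to an isometric involution $\bar\sigma$ of $X_{kl}$ whose fixed set $\Sigma$ is a closed geodesic through both poles, splitting $X_{kl}$ into two halves interchanged by $\bar\sigma$. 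I would use this reflection to unfold a geodesic triangle across $\Sigma$ and reduce the perimeter estimate to a one-dimensional length comparison along the profile, with the thinness $f\le\tfrac12$ playing the role of the classical bound ``a convex curve on $S^2(1/2)$ has length $\le\pi$''.

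The main obstacle is precisely this last step. The diameter bound only gives perimeter $\le 3\operatorname{diam}=3\pi/2$, and squeezing it to $\pi$ must use the circle-quotient structure rather than a curvature bound alone: a spherical cap of radius $\pi/4$ has curvature $\ge 1$ and diameter $\pi/2$ yet contains triangles of perimeter $>\pi$. The tempting shortcut of comparing $d_{kl}$ \emph{pointwise} to the round distance on $S^2(1/2)$ is false, since thinning the neck can \emph{increase} some distances (e.g. $S^3$-antipodal points are identified in $X_{11}$ but not in $X_{kl}$). So the comparison has to be carried out at the level of the convex-curve length bound after unfolding, which is where I expect the real work to lie; the equality configurations $\{N,S,x\}$ and three points on the fattest latitude $r=r_0$ should pin down the extremal cases and guide the estimate.
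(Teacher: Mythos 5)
Your opening reduction is exactly the paper's proof: the paper observes that $X_{kl}\to\tilde{X_{kl}}$ is distance non-increasing, so the perimeter bound descends from $(X_{kl},d_{kl})$ to $(\tilde{X_{kl}},\tilde{d_{kl}})$. But for the bound on $X_{kl}$ itself the paper simply cites Lemma~4 of Hsiang--Kleiner \cite{HK4Dim}, which is verbatim the statement you are left needing; everything after your first paragraph is an attempt to reprove that lemma, and it is not completed. Your preliminary computations are correct and useful (the distance formula $\cos d_{kl}=\max_\theta\operatorname{Re}(Ae^{ik\theta}+Be^{il\theta})$, the diameter bound $\pi/2$, the identity $d(x,N)+d(x,S)=\pi/2$, the profile function with $f\le\tfrac12$), and your warning that neither the diameter bound nor a pointwise comparison with $S^2(1/2)$ suffices is well taken. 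But the decisive step --- unfolding a geodesic triangle across the reflection geodesic $\Sigma$ and reducing to a convex-curve length bound --- is only announced, not carried out, and you say so yourself. As written it is not clear it can be carried out: a general geodesic triangle has no vertex on $\Sigma$, its sides need not bound a convex region, and ``thinner than $S^2(1/2)$'' does not by itself control perimeters (your own counterexamples show why), so the proposal has a genuine gap precisely at the only nontrivial point of the lemma.

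To close the gap you can either cite Lemma~4 of \cite{HK4Dim} directly, as the paper does, or finish the argument algebraically from the distance formula you already wrote down: with $A_{ij}=\bar a_ia_j$, $B_{ij}=\bar b_ib_j$ one has $A_{12}A_{23}=|a_2|^2A_{13}$ and $B_{12}B_{23}=|b_2|^2B_{13}$, so choosing angles $\theta_{12},\theta_{23}$ realizing the first two maxima and evaluating the third expression at $\theta_{12}+\theta_{23}$ relates $\cos d_{13}$ to the other two distances; this is the mechanism behind the Hsiang--Kleiner proof and avoids the surface-of-revolution geometry entirely. Your route via the quotient metric $dr^2+f(r)^2d\psi^2$ may well be salvageable, but it would require an actual proof of the unfolding/comparison step, which is where all the content lies.
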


\begin{proof}
By Lemma 4 of \cite{HK4Dim}, this holds for $(X_{kl},d_{kl})$, take lifts of $x_i$'s, apply lemma 4, and then observe that $X_{kl}\to\tilde{X_{kl}}$ is distance non-increasing. Which gives the desired result.
\end{proof}

We now show that if $\calF$ consists of only isolated points, then it has at most 3 points.

Suppose that $\calF$ contains at least four points, call them $p_i, 1\leq i\leq4$. Let $l_{ij}=\dist(p_i,p_j)$ and let
\[
C_{ij}=\{\gamma:[0,l_{ij}] \to \calO|\gamma\;length\;minimizing\;p_i\;to\;p_j\}.
\]
For each triple $1\leq i,j,k\leq 4$ set
\[
\alpha_{ijk}=\min\{\angle(\gamma_j'(0),\gamma_k'(0))|\gamma_j\in C_{ij}, \gamma_k\in C_{ik}\}.
\]
Since $\calO$ is compact, the minimum exists.

By Toponogov theorem for orbifolds (see \cite{SOrbiIso}), we get that for $i,j,k$ distinct, $\alpha_{ijk}+\alpha_{kij}+\alpha_{jki}>\pi$. Summing over $i,j,k$, we get
\[
\sum_{i=1}^4\sum_{\substack{1\leq j < k\leq 4\\j,k\neq i}} \alpha_{ijk}>4\pi.
\]
On the other hand, by \ref{LemAngle}, we know that
\[
\sum_{\substack{1\leq j < k\leq 4\\j,k\neq i}} \alpha_{ijk}\leq\pi.
\]
Therefore, we can not have more than three isolated fixed points.

From Theorem \ref{ThmTop}, we conclude that if $\pi_1^{orb}(\calO)=0$, then $H^*(|\calO|)=H^*(S^4)$ or $H^*(\CP^2)$. The case where $H^*(|\calO|)=H^*(S^4)$ is of particular interest, since $\pi_1(|\calO|)=0$ allows us to use Hurewicz isomorphisms to see that $\pi_2(|\calO|)=\pi_3(|\calO|)=0$, and $\pi_4(|\calO|)=\bbZ$. Consider a map $\phi:S^4\to|\calO|$ that generates $\pi_4(|\calO|)$, then if one considers the long exact homotopy sequence, one can conclude that $\phi$ must be a homotopy equivalence.

\subsection{2-Dimensional Fixed Point Set Case}

For the case when $\dim\calF=2$, we utilize recent work of Harvey and Searle \cite{HSAlex} on isometries of Alexandrov spaces. In particular, we need the following:

\begin{thm}[\cite{HSAlex} Theorem C part (ii)]
Let a compact Lie group $G$ act isometrically and fixed-point homogeneously on $X^n$, a compact $n$-dimensional Alexandrov space of positive curvature and assume that $X^G\neq\emptyset$ and has a codimension 2 component, then:

The space $X$ is $G$-equivariantly homeomorphic to $(\nu\ast G)/G_p$, where $\nu$ is the space of normal directions to $G(p)$ where $G(p)$ is the unique orbit furthest from $F$.
\end{thm}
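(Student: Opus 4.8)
The plan is to reconstruct $X$ from the two ends of the distance function to the fixed-point set and to recognize the resulting double-cone decomposition as a join. Write $F$ for the codimension-$2$ component of $X^G$ guaranteed by the hypothesis, and consider $f=\dist(\cdot,F):X\to\bbR$. Since the action is fixed-point homogeneous, $G$ acts transitively on the unit normal directions to $F$, so the quotient $X^\ast=X/G$ is again a positively curved Alexandrov space whose boundary contains the image of $F$, and $f$ descends to a distance-to-boundary function $f^\ast$ on $X^\ast$. First I would run the soul-type argument of Grove--Searle in this Alexandrov setting: using Toponogov comparison and the concavity properties of $f^\ast$ near its maximum, show that $f$ attains its maximum on a single $G$-orbit $G(p)$, which is then the unique orbit furthest from $F$.

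The second step is the gradient-flow and disk-bundle decomposition. I would show that $f$ has no critical points away from $F$ and $G(p)$, so that Perelman's gradient flow for the semiconcave function $f$ (or its negative) provides an equivariant deformation. This should yield $X=\mathcal{D}(F)\cup\mathcal{D}(G(p))$, a union of the closed cone neighborhood of $F$ and the closed cone neighborhood of $G(p)$, glued along a single regular level $f^{-1}(c)$, which consists of a single orbit type and is equivariantly a sphere bundle over each of $F$ and $G(p)$.

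For the third step I would identify each piece and assemble the join. The Slice Theorem presents the neighborhood of the top orbit as $G\times_{G_p}\mathrm{Cone}(\nu)$, where $\nu$ is the space of directions normal to $G(p)$; fixed-point homogeneity forces the complementary piece over $F$ to be the cone on the same boundary with $G$ acting transitively on the normal circle. Matching these two cone bundles along $f^{-1}(c)$ and tracking the $G_p$-action then exhibits $X$ as $(\nu\ast G)/G_p$, equivariantly, with $G_p$ acting diagonally on the join.

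The hard part will be Step 2 in the Alexandrov category: in the Riemannian case one simply integrates a gradient-like vector field, but here one must verify the no-critical-point structure of $f$ using only comparison geometry, invoke Perelman's gradient flow to produce the equivariant homeomorphism, and control the isotropy groups along the flow so that it respects the orbit stratification and genuinely glues the two cone bundles. Checking that the output is a join, rather than merely a double mapping cylinder, requires a careful analysis of how the normal sphere to $G(p)$ degenerates as one flows toward $F$.
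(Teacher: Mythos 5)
Your proposal follows essentially the same route as the paper, which only sketches this cited result in a remark: Perelman's work on the soul conjecture and the Sharafutdinov retraction for Alexandrov spaces produces the unique furthest orbit $G(p)$ and the two-piece decomposition, the slice theorem identifies the piece near $G(p)$ as $G\times_{G_p}\mathrm{Cone}(\nu)$ and the piece near the codimension-2 component as $\mathrm{Cone}(G)\times_{G_p}\nu$, and gluing yields $(\nu\ast G)/G_p$. Your extra attention to the critical-point structure of $\dist(\cdot,F)$ and to why the double mapping cylinder is actually a join fills in exactly the details the paper delegates to \cite{HSAlex}.
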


\begin{rmk}
The proof of this theorem comes from the work of Perelman on the soul conjecture and Sharafutdinov retraction for Alexandrov spaces. As well as the slice theorem.

Work of Perelman implies that the slice theorem extends to the codimension 2 fixed point set (which we will refer to as $N$). In the neighborhood of the soul orbit we have $G\times_{G_p} Cone(\nu)$ and in the neighborhood of $N$ we have $Cone(G)\times_{G_p}\nu$. Gluing the two components together we obtain a $G$-equivariant homeomorphism $|\calO| \cong (G\ast\nu)/G_p$.
\end{rmk}

Suppose we have $G_p=S^1$ at the soul point, and hence $\nu=S^3/\Gamma$. This implies that $|\calO| = (S^3/\Gamma\ast S^1)/S^1 = (S^5/\Gamma)/S^1 = \CP^2[\lambda]/\tilde{\Gamma}$. In particular, if $\pi_1^{orb}(\calO)=0$, then $|\calO|$ is homeomorphic to $\CP^2[\lambda]$.

Taking into account how $S^1$ must act on $\calO$, we conclude that $|\calO|=\CP^2[\lambda]/\bbZ_q$ where $\bbZ_q\subset T^2\subset\Isom(\CP^2[\lambda])$, $\bbZ_q$ fixes $[1:0:0]$, $S^1$-action lifts to an action on $\CP^2[\lambda]$ with $S^1\subset T^2$ fixing $[1:0:0]$ and $\{[0:z:w]\}$.

Next suppose that we have $G_p=\bbZ_q$ at the soul point, and hence $\nu=S^2$. This implies that $|\calO| = (S^2\ast S^1)/\bbZ_q = S^4/\bbZ_q$. In particular, if $\pi_1^{orb}(\calO)=0$, then $|\calO|$ is homeomorphic to $S^4$.

Once again, we use our partial knowledge of the $S^1$ action to conclude that $|\calO|=S^4/\bbZ_q$ where we view $S^4\subset\bbC^2\oplus\bbR$, and a generator of $\bbZ_q$ acts like $x\cdot(z,w;r)=(e^{2\pi i/q} z, e^{2\pi i k/q} w; r)$ where $(k,q)=1$. The $S^1$-action lifts to an action on $S^4$ given by $\theta\cdot(z,w;r)=(e^{i\theta} z, w; r)$.

\begin{conj}
Let $\calO$ be as in Theorem \ref{ThmMain}, then $|\calO|$ is homeomorphic to either $S^4$ or $|\CP^2[\lambda]|$.
\end{conj}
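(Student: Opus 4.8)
Since the conjecture is exactly the ``Furthermore'' conclusion of Theorem~\ref{ThmMain} when $\calF$ has a 2-dimensional component, the whole content lies in the isolated fixed point case, and the plan is to carry out, in the orbifold category, the equivariant reconstruction of circle actions on 4-manifolds due to Orlik--Raymond and Fintushel \cite{FCircle}, using positive curvature to pin down the orbit space. I would first separate the two alternatives by the Euler characteristic: by Kobayashi's Lemma~\ref{LemKob}(2), $\chi(|\calO|)=\chi(|\calF|)$ equals the number of isolated fixed points, which we have already shown is $2$ or $3$. Thus two fixed points is precisely the $H^*(S^4)$ case (target $S^4$) and three fixed points the $H^*(\CP^2)$ case (target $|\CP^2[\lambda]|$), so the dichotomy of the conjecture is fixed from the outset.

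The object to study is the quotient $X=|\calO|/S^1$, a compact positively curved 3-dimensional Alexandrov space. Using the Slice Theorem of Section~\ref{SecOrbi} together with Proposition~\ref{OrbiGrp}, I would catalogue its strata: the image of an isolated fixed point $y$ has a neighborhood $\mathrm{Cone}\big((S^3/\Gamma_y)/S^1\big)$ whose link is a positively curved weighted $2$-sphere, so $X$ is locally $\bbR^3$ there; the exceptional $S^1$-orbits (isotropy $\bbZ_q\subset S^1$) and the codimension-2 orbifold strata, which by the Corollary to Proposition~\ref{OrbiGrp} carry $\Gamma_x=\bbZ_q\subset SO(3)\cap SU(2)$, project to a collection of arcs; and over the complementary regular part $|\calO|\to X$ is a principal $S^1$-bundle. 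Recording $X$ together with these labelled arcs produces a \emph{weighted orbit space}, and the first main step is an orbifold analogue of Fintushel's theorem: this labelled data determines $|\calO|$ up to $S^1$-equivariant homeomorphism, the local charts being furnished by the Slice Theorem and glued over the regular part.

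Next I would use the curvature to identify $X$. Applying critical point theory for distance functions on Alexandrov spaces (cf.\ Perelman \cite{PAlex}) to $\dist(\,\cdot\,,[y])$ for a fixed-point image $[y]$, together with the angle estimate (Lemma~\ref{LemAngle} and \cite{SOrbiIso}) that already bounded the number of fixed points, I expect to rule out critical points other than the fixed-point images. When there are two fixed points this exhibits $|\calO|$ as a suspension of the link; the hypothesis $\pi_1^{orb}(\calO)=0$ should then force its underlying space to be $S^3$ (ruling out suspensions of genuine spherical space forms such as the Poincar\'e sphere), whence $|\calO|\cong S^4$. When there are three fixed points the same analysis should present $X$ with its singular arcs forming a triangle on the three special points; matching the resulting weighted orbit space with that of a generic $S^1\subset T^2$ acting on $\CP^2[\lambda]$, whose arc labels record the weights and the $\lambda_i$, would give the equivariant homeomorphism with $|\CP^2[\lambda]|$ and, in both cases, the statement that the action is linear.

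The main obstacle is twofold. The genuinely new difficulty, absent in the manifold case, is that the slice representation at an isolated fixed point can be an arbitrary $\phi_{k,l}$ rather than $\phi_{1,1}$, as already happens in Example~\ref{ExL5}; consequently each arc in the orbit space carries \emph{both} an $S^1$-isotropy weight and an orbifold group, and one must show these two pieces of data interact rigidly enough that the weighted orbit space admits a unique equivariant filling. It is here that Fintushel's smooth argument must be genuinely reworked rather than quoted. The second difficulty is that, lacking a Freedman-type classification of simply connected 4-orbifolds, the $H^*(S^4)$ conclusion must be upgraded from the homotopy equivalence already obtained in Theorem~\ref{ThmMain} all the way to a homeomorphism \emph{through} the reconstruction, which demands explicit control of both the topology of $X$ and the gluing. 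I expect that it is precisely the positive-curvature input---only two or three fixed points, forced by the Toponogov estimate into standard position---that makes the reconstruction rigid enough to close this gap.
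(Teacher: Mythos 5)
This statement is labelled a conjecture in the paper, and the paper does not prove it: the accompanying remark records only partial progress, namely that in the isolated fixed point case with $H^*(|\calO|)=H^*(S^4)$ the neighborhoods of the two fixed points are homeomorphic either to $B^4$ or to a cone over the Poincar\'e dodecahedral space, and that the author is unable to rule out the latter. Your proposal is therefore not being measured against an existing proof but against an explicitly identified open gap, and it does not close that gap. The decisive step in your two-fixed-point analysis is the sentence asserting that $\pi_1^{orb}(\calO)=0$ ``should'' force the common link to be $S^3$ and thereby exclude suspensions of spherical space forms such as the Poincar\'e sphere. That is exactly the unresolved point: the Poincar\'e homology sphere is $S^3/\Gamma^*$ with $\Gamma^*$ the binary icosahedral group, which is perfect, so its appearance as the link of a fixed point is invisible to $H^*(|\calO|;\bbZ)$, to $\pi_1(|\calO|)$, and is not obviously obstructed by $\pi_1^{orb}(\calO)=0$ either, since $\Gamma_y\subset U(2)$ is a local datum and Proposition \ref{OrbiGrp} permits it. The suspension of the Poincar\'e sphere is simply connected with the cohomology of $S^4$ but is not homeomorphic to $S^4$ (it is not even a manifold at the cone points), so without an argument excluding $\Gamma^*$ your reconstruction would terminate at a space that contradicts the conjecture's conclusion. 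You would need a genuinely new input here --- curvature, the $S^1$-action, or the PL structure the remark alludes to --- and none is supplied.

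Two further caveats. First, the orbifold analogue of the Orlik--Raymond/Fintushel reconstruction, which you correctly flag as needing to be ``reworked rather than quoted,'' is itself a substantial unproven assertion: as Example \ref{ExL5} shows, the slice representation at an isolated fixed point can be any $\phi_{k,l}$, so the weighted orbit space carries coupled isotropy and orbifold data whose rigidity is precisely what must be established, and your proposal gives no argument that the filling is unique. Second, your framing of the dichotomy by the number of fixed points is consistent with the paper (two points gives $\chi=2$ and the $H^*(S^4)$ case, three gives $H^*(\CP^2)$, and the codimension-two case is already settled in Section \ref{SecPf} via Harvey--Searle), so the architecture of your plan is reasonable; but as written it is a research program whose critical steps are marked ``I expect'' and ``should,'' and the one that matters most coincides with the obstruction the paper states it cannot overcome.
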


\begin{rmk}
The only case remaining is the when we only have isolated fixed points.

The author has been able to show that if $H^*(|\calO|)=H^*(S^4)$, then the neighborhoods of the two fixed points are homeomorphic to either $B^4$ or a cone over the Poincar\'e Dodecahedral space. Furthermore, using the fact that an orbifold has a natural PL structure, one can show that if one of the neighborhoods is $B^4$ then so is the other.

If one is able to rule out the case where both points have neighborhoods homeomorphic to a cone over the Poincar\'e Dodecahedral space, then one can conclude that $|\calO|$ is homeomorphic to $S^4$.
\end{rmk}

\bibliographystyle{amsalpha}
\bibliography{References}

\providecommand{\bysame}{\leavevmode\hbox to3em{\hrulefill}\thinspace}
\providecommand{\MR}{\relax\ifhmode\unskip\space\fi MR }
% \MRhref is called by the amsart/book/proc definition of \MR.
\providecommand{\MRhref}[2]{%
  \href{http://www.ams.org/mathscinet-getitem?mr=#1}{#2}
}
\providecommand{\href}[2]{#2}
\begin{thebibliography}{GWZ08}

\bibitem[Dea11]{DP2}
O.~Dearricott, \emph{A 7-manifold with positive curvature}, Duke Math J.
  \textbf{158} (2011), no.~2, 307--346.

\bibitem[Fin78]{FCircle}
R.~Fintushel, \emph{Classification of circle actions on 4-manifolds}, Trans.
  Amer. Math.Soc. \textbf{242} (1978), 377--390.

\bibitem[FR05]{FRHomPC}
F.~Fang and X.~Rong, \emph{Homeomorphism classification of positively curved
  manifolds with almost maximal symmetry rank}, Math. Ann. \textbf{332} (2005),
  no.~1, 81--101.

\bibitem[Fre82]{F4M}
M.~H. Freedman, \emph{The topology of four-dimensional manifolds}, J. Diff.
  Geo. \textbf{17} (1982), no.~3, 357--453.

\bibitem[GGG13]{GGGAlex}
F.~Galaz-Garcia and L.~Guijarro, \emph{On three-dimensional alexandrov spaces},
  Preprint arXiv:1307.3929 [math.DG] (2013).

\bibitem[Gro57]{GrTohoku}
A.~Groethendieck, \emph{Sur quelques points d'alg\`ebre homologique}, T\^ohoku
  Math J. \textbf{9} (1957), no.~2-3, 119--221.

\bibitem[GS94]{GSSym}
K.~Grove and C.~Searle, \emph{Positively curved manifolds with maximal
  symmetry-rank}, J. of Pure and App. Alg. \textbf{91} (1994), 137--142.

\bibitem[GVZ11]{GVZPosC}
K.~Grove, L.~Verdiani, and W.~Ziller, \emph{An exotic ${T_1 S^4}$ with positive
  curvature}, Geom. Funct. Anal. \textbf{21} (2011), no.~3, 499--524.

\bibitem[GWZ08]{GWZCoho}
K.~Grove, B.~Wilking, and W.~Ziller, \emph{Positively curved cohomogeneity one
  manifolds and 3-{S}asakian geometry}, J. Diff. Geo. \textbf{78} (2008),
  33--111.

\bibitem[Hit96]{HEinst}
N.~Hitchin, \emph{A new family of {E}instein metrics}, Manifolds and Geometry
  (Pisa, 1993), Sympos. Math., vol. XXXVI, Cambridge Univ. Press, Cambridge,
  1996, pp.~190--222.

\bibitem[HK89]{HK4Dim}
W.-Y. Hsiang and B.~Kleiner, \emph{On the topology of positively curved
  4-manifolds with symmetry}, J. Diff. Geo. \textbf{29} (1989), no.~3,
  615--621.

\bibitem[HS12]{HSAlex}
J.~Harvey and C.~Searle, \emph{Orientation and symmetries of {A}lexandrov
  spaces with applications in positive curvature}, Preprint arXiv:1209.1366
  [math.DG] (2012).

\bibitem[Ken13]{KHopf}
L.~Kennard, \emph{On the {H}opf conjecture with symmetry}, Geom. Topol.
  \textbf{17} (2013), no.~1, 563--53.

\bibitem[Kob58]{KFix}
S.~Kobayashi, \emph{Fixed points of isometries}, Nagoya Math. J. \textbf{13}
  (1958), 63--68.

\bibitem[Kob72]{KTrans}
\bysame, \emph{Transformation groups in differential geometry}, Springer, 1972.

\bibitem[M{\"u}t87]{MuTh}
M.~M{\"u}ter, \emph{Kr\"ummungserh\"ohende deformationen mittels
  gruppenaktionen}, Ph.D. thesis, Westf\"alische Wilhelms-Universit\"at
  M\"unster, 1987.

\bibitem[Per91]{PAlex}
G.~Perelman, \emph{Alexandrov's spaces with curvatures bounded from below ii},
  Preprint
  (http://www.math.psu.edu/petrunin/papers/alexandrov/perelmanASWCBFB2+.pdf)
  (1991).

\bibitem[Pet98]{PPar}
A.~Petrunin, \emph{Parallel transportation for {A}lexandrov spaces with
  curvature bounded below}, Geom. Funct. Anal. \textbf{8} (1998), 123--148.

\bibitem[Sat56]{SVM}
I.~Satake, \emph{On a generalization of the notion of manifold}, Proc. Nat.
  Acad. Sci. U.S.A. \textbf{42} (1956), 359--363.

\bibitem[Sco83]{SG3M}
P.~Scott, \emph{The geometries of 3-manifolds}, Bull. London Math. Soc.
  \textbf{15} (1983), 401--487.

\bibitem[Sta05]{SOrbiIso}
E.~Stanhope, \emph{Spectral bounds on orbifold isotropy}, Ann. Global Anal.
  Geom. \textbf{27} (2005), no.~4, 355--375.

\bibitem[Wil03]{WTorAct}
B.~Wilking, \emph{Torus actions on manifolds of positive sectional curvature},
  Acta Math. \textbf{191} (2003), no.~2, 259--297.

\bibitem[Wil07]{WPCM}
\bysame, \emph{Nonnegative and positively curved manifolds}, Surveys in
  Differential Geometry (J.~Cheeger and K.~Grove, eds.), vol.~XI, International
  Press, 2007.

\bibitem[Yer14]{YSU3}
D.~Yeroshkin, \emph{Orbifold biquotients of {$SU(3)$}}, Preprint
  arXiv:1401.7565 [math.DG] (2014).

\bibitem[Zil07]{ZExam}
W.~Ziller, \emph{Examples of manifolds with non-negative sectional curvature},
  Surveys in Differential Geometry (J.~Cheeger and K.~Grove, eds.), vol.~XI,
  International Press, 2007.

\bibitem[Zil09]{ZCoho}
\bysame, \emph{On the geometry of cohomogeneity one manifolds with positive
  curvature}, Riemannian Topology and Geometric Structures on Manifolds, Progr.
  Math., vol. 271, Birkh\"auser Boston, 2009, pp.~233--262.

\end{thebibliography}
\end{document}